\title{Improving the trivial bound for $\ell$-torsion in class groups}
\author{Robert J. Lemke Oliver}
\address{Department of Mathematics, Tufts University, Medford, MA 02155}
\email{robert.lemke$\_{ }$oliver@tufts.edu}
\author{Asif Zaman}
\address{Department of Mathematics, University of Toronto, Toronto, Ontario, Canada M5S 2E4}
\email{asif.zaman@utoronto.ca}
\newtheorem{theorem}{Theorem}
\newtheorem{lemma}[theorem]{Lemma}
\newtheorem{corollary}{Corollary}
\theoremstyle{definition} 
\theoremstyle{example} 
\theoremstyle{remark} \newtheorem{remark}[theorem]{Remark}
\newtheorem*{remark*}{Remark}
\crefname{lemma}{Lemma}{Lemmas}
\crefname{theorem}{Theorem}{Theorems}
\crefname{corollary}{Corollary}{Corollaries}
\crefname{remark}{Remark}{Remarks}
\crefname{section}{Section}{Sections}
\numberwithin{equation}{section}
\newcommand{\robert}[1]{{\color{red} \sf $\clubsuit\clubsuit\clubsuit$ Robert: [#1]}}
\newcommand{\asif}[1]{{\color{blue} \sf $\clubsuit\clubsuit\clubsuit$ Asif: [#1]}}
\newcommand{\N}{\mathrm{N}}
\renewcommand{\epsilon}{\varepsilon}
\renewcommand{\leq}{\leqslant}
\renewcommand{\geq}{\geqslant}
\newcommand{\C}{\mathbb{C}}
\newcommand{\R}{\mathbb{R}}
\newcommand{\Q}{\mathbb{Q}}
\newcommand{\km}{\mathfrak{m}}
\newcommand{\kn}{\mathfrak{n}}
\newcommand{\kp}{\mathfrak{p}}
\newcommand{\hidecomments}{
	\renewcommand{\asif}[1]{}
	\renewcommand{\robert}[1]{}
}
\begin{document}

	\begin{abstract}

		For any number field $K$ with $D_K=|\mathrm{Disc}(K)|$ and any integer $\ell \geq 2$, we improve over the commonly cited trivial bound $|\mathrm{Cl}_K[\ell]| \leq |\mathrm{Cl}_K| \ll_{[K:\Q],\epsilon} D_K^{1/2+\epsilon}$ on the $\ell$-torsion subgroup of the class group of $K$ by showing that $|\mathrm{Cl}_K[\ell]| = o_{[K:\Q],\ell}(D_K^{1/2})$.  In fact, we obtain an explicit log-power saving.  This is the first general unconditional saving over the trivial bound that holds for all $K$ and all $\ell$.	
	
	\end{abstract}
	
	\maketitle

\section{Introduction}

Let $K \neq \Q$ be a number field of degree $[K:\mathbb{Q}]$ and absolute discriminant $D_K = |\mathrm{Disc}(K)|$. Let $\mathrm{Cl}_K$ be the class group of $K$. For integers $\ell \geq 2$, the $\ell$-torsion of the class group satisfies 
\begin{equation} \label{eqn:OldTrivialBound}
|\mathrm{Cl}_K[\ell]| \leq |\mathrm{Cl}_K| \ll_{[K:\mathbb{Q}]} D_K^{1/2} (\log D_K)^{[K:\mathbb{Q}]-1}
\end{equation}
by a result of Landau.  This trivial bound can be refined. By the class number formula, we have that 
\begin{equation} \label{eqn:ClassNumberFormula}
	|\mathrm{Cl}_K| \asymp_{[K:\Q]} \frac{\kappa_K D_K^{1/2}}{R_K}, 
\end{equation}
where $\kappa_K$ is the residue of the Dedekind zeta function $\zeta_K(s)$ at $s=1$, and $R_K$ is the regulator of $K$. A theorem of Landau implies that 
\begin{equation} \label{eqn:ResidueBound}
\kappa_K \ll_{[K:\Q]} (\log D_K)^{[K:\Q]-1}, 
\end{equation}
and a theorem of Silverman \cite{Silverman} (cf. Akhtari--Vaaler \cite{Akhtari-Vaaler-Regulators}) implies that 
\begin{equation} \label{eqn:RegulatorBound}
R_K \gg_{[K:\Q]} (\log D_K)^{r_K-\rho_K}, 
\end{equation}
where $r_K$ is the rank of the unit group of $K$ and $\rho_K$ is the maximum rank of the unit group over all proper subfields of $K$. This combination yields a refinement over \eqref{eqn:OldTrivialBound} of the form 
\begin{equation}  \label{eqn:TrivialBound}
|\mathrm{Cl}_K[\ell]| \leq  |\mathrm{Cl}_K| \ll_{[K:\mathbb{Q}]} D_K^{1/2} (\log D_K)^{[K:\Q]-r_K+\rho_K-1}.  
\end{equation}
Improvements over this refined trivial bound\footnote{The literature on $\ell$-torsion in class groups  typically refers to the ``trivial bound'' as either Landau's bound \eqref{eqn:OldTrivialBound} or the cruder bound $|\mathrm{Cl}_K[\ell]| \ll_{[K:\Q],\epsilon} D_K^{1/2+\epsilon}$ for $\epsilon > 0$.}
have generated significant interest in many cases, and often take a power-savings form
\begin{equation} \label{eqn:PowerSavings}
|\mathrm{Cl}_K[\ell]| \ll_{[K:\Q],\ell,\Delta} D_K^{\Delta} 
\end{equation}
for some fixed $0 < \Delta < 1/2$ whose value depends on $\ell$, $[K:\Q]$, and the subfield structure of $K$. The so-called  ``$\ell$-torsion conjecture'' posits that \textit{any} fixed $\Delta > 0$ is admissible. This question was first asked by Brumer--Silverman \cite{Brumer-Silverman-NumberECs}.

Unconditional pointwise progress towards this conjecture has been restricted to certain degrees $[K:\Q]$,  integers $\ell$, or subfield structures.  Table \ref{table} below summarizes the progress made by Bhargava--Shankar--Taniguchi--Thorne--Tsimerman--Zhao \cite{BSTTZ-TwoTorsion}, Ellenberg--Venkatesh \cite{Ellenberg-Venkatesh-ReflectionPrinciples}, Gauss,  Heath-Brown \cite{HeathBrown-ClassGroups}, Helfgott--Venaktesh \cite{Helfgott-Venaktesh-ThreeTorsion}, Kl\"{u}ners--Wang \cite{Kluners-Wang-ClassGroup}, Pierce \cite{Pierce-3Part,Pierce-3Part-SquareSieve}  and Wang \cite{Wang-PointwiseAbelian,Wang-PointwiseNilpotent}.  There has also been substantial interest in unconditional progress \textit{on average} over families of number fields, such as \cite{HeathBrown-Pierce-Averages,Ellenberg-Pierce-Wood-ClassGroups,Widmer-Bounds,Frei-Widmer-CyclicTorsion,Pierce-TurnageButterbaugh-Wood-EffectiveChebotarev,An-DihedralTorsion,Frei-Widmer-Moments,Thorner-Zaman-DedekindZeta,LemkeOliver-Wang-Wood-ThreeTorsion,LemkeOliver-Thorner-Zaman-ApproximateArtin,Koymans-Thorner-Moments,LemkeOliver-Smith-FaithfulArtin}.    For further discussion, we refer the reader to the excellent survey article of Pierce \cite{Pierce-ICM2022} and very recent work of Heath-Brown \cite{HeathBrown-ClassGroups}.

\def\arraystretch{1.2}
{\small\begin{table}[h!]
\caption{Pointwise progress towards $\ell$-torsion conjecture according to \eqref{eqn:PowerSavings} }
\label{table} 
\begin{tabular}{|l|l|c|c|l|l|}
\hline  \sc Source & $\Delta >$ & $[K:\Q]$ & $\ell$ & \sc Other restrictions  \\
\hline Gauss &  $0$ & $2$ & $2$ &  \\
\hline \cite{Pierce-3Part,Pierce-3Part-SquareSieve} & $1/2-1/56$ & $2$ & $3$ & \\
\hline \cite{Helfgott-Venaktesh-ThreeTorsion} & $1/2-0.0582...$ & $2$ & $3$ & \\
\hline \cite{Ellenberg-Venkatesh-ReflectionPrinciples} & $1/2-1/6$ & $2,  3 $ & $3$ & \\
\hline  \cite{HeathBrown-ClassGroups} & $1/2-1/2\ell$ & $2,3$ &  prime  &  $p \mid D_K \implies p \leq D_K^{\delta_\ell}$  \\
\hline  \cite{HeathBrown-ClassGroups} & $1/2-1/4\ell$ & $3$ &   prime   & pure cubic extension \\
\hline  \cite{BSTTZ-TwoTorsion} & $1/2-0.2215...$ & $3,4$ & $2$ &  \\
\hline  \cite{Ellenberg-Venkatesh-ReflectionPrinciples}  &  $1/2-1/168$  &  $4$  &  $3$  &  $\mathrm{Gal}(\widetilde{K}/\mathbb{Q}) \cong A_4$ or $S_4$  \\
\hline  \cite{Ellenberg-Venkatesh-ReflectionPrinciples}  &  $1/2-\delta$  &  $4$  &  $3$  &  $\mathrm{Gal}(\widetilde{K}/\mathbb{Q}) \cong C_4, V_4,$ or $D_4$ \\
\hline  \cite{BSTTZ-TwoTorsion} & $1/2-1/{2[K:\Q]}$ & $\geq 5$ & $2$ &  \\
\hline  \cite{Kluners-Wang-ClassGroup} & $0$ & $p^r$ &   $p^s$    & $\mathrm{Gal}(\widetilde{K}/\mathbb{Q}) \cong G$, any $p$-group $G$  \\
\hline  \cite{Wang-PointwiseAbelian,Wang-PointwiseNilpotent} & $1/2-\delta_{G,\ell}$ &  powerful  & $\geq 2$  & $\mathrm{Gal}(K/\mathbb{Q}) \cong G$, many\footnotemark\,nilpotent groups $G$  \\
\hline 
\end{tabular}
\end{table}}%

\footnotetext{More precisely, this includes all nilpotent groups $G$ where each Sylow subgroup is not cyclic or quaternion.}

The purpose of this article is to give a uniform improvement over  \eqref{eqn:TrivialBound} weaker than \eqref{eqn:PowerSavings}. 
\begin{corollary} \label{cor:Main}
If $K \neq \mathbb{Q}$ is a number field, and $\ell \geq 2$ is an integer, then 
\[
|\mathrm{Cl}_K[\ell]| \ll_{[K:\mathbb{Q}],\ell} D_K^{1/2} (\log D_K)^{-r_K+\rho_K-1} (\log\log D_K)^{3[K:\Q]/2}.
\]
\end{corollary}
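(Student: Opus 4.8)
Write $n=[K:\Q]$. The plan is to derive Corollary~\ref{cor:Main} from a bound of the shape
\[
|\mathrm{Cl}_K[\ell]| \ll_{n,\ell} \frac{D_K^{1/2}}{R_K\log D_K}\,(\log\log D_K)^{3n/2},
\]
from which the stated inequality is immediate via Silverman's regulator bound \eqref{eqn:RegulatorBound} (which gives $R_K^{-1}\ll_n(\log D_K)^{-r_K+\rho_K}$). By \eqref{eqn:ClassNumberFormula} this displayed estimate is the assertion that the trivial bound $|\mathrm{Cl}_K[\ell]|\le|\mathrm{Cl}_K|$ can be sharpened by a factor $\gg_{n,\ell}\kappa_K\log D_K\,(\log\log D_K)^{-3n/2}$; since \eqref{eqn:TrivialBound} arises precisely by bounding $\kappa_K$ by its worst case $(\log D_K)^{n-1}$ in \eqref{eqn:ResidueBound}, the real goal is to recover the full residue factor $\kappa_K$ for the torsion subgroup, and then an extra power of $\log D_K$.

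I would start from the classical input that every class $C$ with $C^\ell=1$ contains an integral ideal of norm $\ll_n D_K^{1/2}$, and that $\mathfrak{a}\mapsto\mathfrak{a}^\ell$ is injective on integral ideals; writing $N(C,x)$ for the number of integral ideals of norm at most $x$ in the class $C$, this yields the identity $\sum_{C\in\mathrm{Cl}_K[\ell]}N(C,x)=\#\{\mathfrak{a}:\N\mathfrak{a}\le x,\ \mathfrak{a}^\ell\text{ principal}\}$. The strategy then has two halves. First, an upper bound for the right-hand side: the point is that these $\mathfrak{a}$ are $\ell$-th powers of ideals of norm $\ll_n D_K^{1/2}$, a much stronger constraint than merely generating a principal ideal---crudely discarding it only reproduces \eqref{eqn:TrivialBound}---and I would exploit it by sieving the corresponding generators $\alpha$ (with $(\alpha)$ an $\ell$-th power ideal of controlled size) against the Euler product of $\zeta_K$, truncated at prime ideals of norm up to a power of $\log D_K$; the Mertens-type behaviour of that truncated product is what produces $(\log\log D_K)$-powers, with the exponent $3n/2$ reflecting the $n$ archimedean directions together with the way $R_K$ governs the shape of the region in which the $\alpha$ live. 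Second, to convert this into a bound on $|\mathrm{Cl}_K[\ell]|$ itself, one needs $N(C,x)$ to be comparable to its expected size $(\kappa_K/|\mathrm{Cl}_K|)\,x\asymp R_Kx/D_K^{1/2}$ for \emph{every} class $C$ at the same scale $x$, which requires pushing the asymptotic for the ideal-counting function of a class down to $x\asymp D_K^{1/2}(\log D_K)^{O(1)}$, well below the range where the classical error term permits it.

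The main obstacle is exactly that last uniform estimate, together with the ever-present danger of circularity: virtually any count of ``ideals lying in $\ell$-torsion classes'' is tautologically consistent with the answer, so the improvement must be wrung out of genuinely new analytic information. Concretely I expect to need (i) a bound for the partial sums $\sum_{\N\mathfrak{a}\le x}\chi(\mathfrak{a})$ over the nontrivial characters $\chi$ of $\mathrm{Cl}_K$, uniform in the discriminant and strong enough to make the main term dominate at the critical scale---which forces one to rule out, or to arrange to be harmless, a possible Landau--Siegel zero of $L(s,\chi)$; (ii) a careful treatment of the boundary/error term in the ideal-counting function of a class at that scale, where the region $\{|\N z|\le x\}$ taken modulo the unit group is long and thin and its boundary is controlled by $R_K$; and (iii) a final optimization over $x$ and an auxiliary smoothing level, arranged so that one loses only $(\log\log D_K)^{3n/2}$ rather than a power of $\log D_K$.
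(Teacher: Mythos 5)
Your proposal diverges fundamentally from the paper's argument and, as you yourself acknowledge, it leaves the central difficulty unresolved. The paper never counts ideals in individual classes and never touches the characters of $\mathrm{Cl}_K$. Instead it starts from the Ellenberg--Venkatesh lemma (\cref{lem:EV}), which bounds $|\mathrm{Cl}_K[\ell]|$ by $\kappa_K\sqrt{D_K}/M$ where $M$ counts small degree-one primes coprime to $\mathfrak{d}_K$. The whole point of \cref{thm:Main} is a dichotomy on that count: if there are \emph{many} small primes, the EV denominator $M$ is large and you win directly; if there are \emph{few} small primes, then by pushing a weighted sum of $\lambda_K^\flat(n)$ through Mellin inversion against $\zeta_K$ (with the sifting factor $H_K$ of \cref{lem:Sift} and the classical convexity bound of \cref{lem:Convexity}), one shows that the residue $\kappa_K$ --- and hence $|\mathrm{Cl}_K|$ itself, via the class number formula --- is small. \cref{cor:Main} then follows from \cref{thm:Main} by the elementary case split: either $V_K$ is bounded, in which case $|\mathrm{Cl}_K[\ell]|\leq|\mathrm{Cl}_K|$ already gives the stated bound, or $V_K\geq [K:\Q]/\delta$, in which case \cref{thm:Main} applies and $V_K^{[K:\Q]}(\log D_K)^{-\delta V_K}\ll 1$. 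The exponent $3[K:\Q]/2$ arises concretely from the $(\log\log D_K)^{[K:\Q]/2}$ loss in the lower bound for $H_K(1,x)$, combined with a further $(\log\log D_K)^{[K:\Q]}$ accumulated in the optimization over the parameter $z$; it has nothing to do with archimedean directions or with the shape of a fundamental domain for the unit action.

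The genuine gap in your proposal is the step you flag yourself: to make a per-class count $N(C,x)$ match its main term $(\kappa_K/|\mathrm{Cl}_K|)x$ uniformly in $C$ at the scale $x\asymp D_K^{1/2}(\log D_K)^{O(1)}$, you need cancellation in $\sum_{\N\mathfrak{a}\leq x}\chi(\mathfrak{a})$ for all nontrivial $\chi$ on $\mathrm{Cl}_K$, and this is precisely the kind of statement that is currently out of reach unconditionally because of the possibility of a Landau--Siegel zero for $L(s,\chi)$. Nothing in your sketch removes that obstruction; ``arrange to be harmless'' is not an argument. The whole design of the paper is to avoid ever having to equidistribute ideals among classes: \cref{lem:EV} replaces that requirement by a count of small primes in $K$ (no class restriction), and the only $L$-function input needed is a convexity bound for $\zeta_K$ on the critical line, which is classical and unconditional. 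Similarly, the discussion in \cref{rem:Effective} shows the authors were careful precisely about the effectivity issue you would reintroduce. As written, your approach does not prove \cref{cor:Main}; it reformulates the problem into a harder one.
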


This result follows immediately from a new inverse relationship between the size of the $\ell$-torsion subgroup of the class group and the class group itself, provided that the class group is close to extremal size.

\begin{theorem} \label{thm:Main} 
	Let $K \neq \mathbb{Q}$ be a number field. Let $\ell \geq 2$ be an integer. There exists a sufficiently small constant $\delta = \delta_{[K:\Q],\ell}  > 0$  such that if  
	\begin{equation}\label{eqn:ClassNumberAssumption}
	|\mathrm{Cl}_K| = V_{K}^{[K:\Q]} D_K^{1/2} (\log D_K)^{-r_K+\rho_K-1} (\log\log D_K)^{3[K:\Q]/2}, 
	\end{equation}
	for some $V_K \geq [K:\Q]/\delta$ then  
	\[
	|\mathrm{Cl}_K[\ell]| \ll_{[K:\mathbb{Q}],\ell}  |\mathrm{Cl}_K| (\log D_K)^{-\delta V_K} .
	\]
\end{theorem}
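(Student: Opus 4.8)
Since the hypothesis \eqref{eqn:ClassNumberAssumption} concerns the \emph{size} of $\mathrm{Cl}_K$, I would begin by translating it, via the class number formula \eqref{eqn:ClassNumberFormula} and the bounds \eqref{eqn:ResidueBound}, \eqref{eqn:RegulatorBound}, into statements about the two analytic invariants it controls: $\mathrm{Cl}_K$ being near extremal forces the Dedekind residue $\kappa_K$ to be large --- one extracts $\kappa_K \gg_{[K:\Q]} V_K^{[K:\Q]}(\log D_K)^{-1}(\log\log D_K)^{3[K:\Q]/2}$, while \eqref{eqn:ResidueBound} keeps $V_K \ll \log D_K$ --- and the regulator $R_K$ to be comparatively small. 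Largeness of $\kappa_K$ is useful because it produces an abundance of small-norm (prime) ideals: from $\log\zeta_K(1+\tfrac{1}{\log y}) = \log\log y + \log\kappa_K + O_{[K:\Q]}(1)$ one gets $\sum_{\N\kp\leq y}\tfrac1{\N\kp} = \log\log y + \log\kappa_K + O_{[K:\Q]}(1)$, so there are more small prime ideals than in the generic case by an amount measured by $\log V_K$. Smallness of $R_K$ is useful because the unit log-lattice is then ``dense'', which makes the geometry-of-numbers step efficient.

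\textbf{The argument.} I would run the Ellenberg--Venkatesh reduction: if $C\in\mathrm{Cl}_K[\ell]$ is represented by an integral ideal $\ka$ with $\N\ka\leq Y$, then $\ka^\ell=(\alpha)$ with $\N\alpha\leq Y^\ell$, and choosing $\alpha$ canonically makes $C\mapsto\ka\mapsto\alpha$ injective, whence
\[
\#\bigl\{C\in\mathrm{Cl}_K[\ell]: C\text{ has a representative of norm}\leq Y\bigr\}\ \leq\ \#\bigl\{\ka:\N\ka\leq Y,\ [\ka]\in\mathrm{Cl}_K[\ell]\bigr\}.
\]
The right-hand side admits two independent estimates. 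First, it counts generators of principal $\ell$-th power ideals of norm $\leq Y^\ell$; confining the generators to a box of side $\asymp Y^{\ell/[K:\Q]}e^{\mu_K}$, where $\mu_K$ is the covering radius of the unit log-lattice (small because $R_K$ is), and bounding $\mathcal O_K$-points on the hypersurfaces $\{\N(\cdot)=m^\ell\}$, $m\leq Y$, by divisor-type estimates, gives a bound of the shape $\ll_{[K:\Q],\ell} Y\,(\log D_K)^{O_{[K:\Q]}(1)}/R_K$. Second, expanding $[\ka]\in\mathrm{Cl}_K[\ell]$ in Hecke characters of $\mathrm{Cl}_K$ yields $\#\{\ka:\N\ka\leq Y,[\ka]\in\mathrm{Cl}_K[\ell]\}=\tfrac{|\mathrm{Cl}_K[\ell]|}{|\mathrm{Cl}_K|}\kappa_K Y+(\text{error})$, the error being governed by the second moments $\sum_{\chi}\bigl|\sum_{\N\ka\leq Y}\chi([\ka])\bigr|^2$ and hence by the zeros near $s=1$ of the Hecke $L$-functions $L(s,\chi)$, $\chi\in\widehat{\mathrm{Cl}_K}$ --- all of conductor $\asymp D_K$, with at most one exceptional real zero in the family by Landau--Page. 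Using a log-free zero-density estimate for this family (and treating the possible exceptional character separately, via Siegel's theorem or Deuring--Heilbronn repulsion), this error becomes negligible once $Y$ exceeds a threshold of polynomial size in $D_K$ that shrinks as $\kappa_K$ grows, while Minkowski's theorem (every class contains some $\ka$ with $\N\ka\leq M_K\asymp_{[K:\Q]} D_K^{1/2}$) shows every torsion class is captured by $Y=M_K$. Comparing the two estimates at the optimal $Y$ and substituting the relations between $\kappa_K$, $R_K$, and $V_K$ from the first step should then yield $|\mathrm{Cl}_K[\ell]|\ll_{[K:\Q],\ell}|\mathrm{Cl}_K|(\log D_K)^{-\delta V_K}$ for $\delta=\delta_{[K:\Q],\ell}$ sufficiently small, with the $(\log\log D_K)^{3[K:\Q]/2}$ in \eqref{eqn:ClassNumberAssumption} being precisely the margin needed to absorb the covering-radius and divisor-type losses.

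\textbf{Main obstacle.} The delicate point is the equidistribution of small-norm ideals among ideal classes. The family $\widehat{\mathrm{Cl}_K}$ has $\asymp|\mathrm{Cl}_K|\asymp D_K^{1/2}$ members of conductor $\asymp D_K$, so the trivial per-character bound loses everything, and a genuine zero-density estimate --- log-free (to keep the final saving explicit) and robust to a possible exceptional real zero --- is essential. Even then, the scheme only gains because ``$\mathrm{Cl}_K$ near extremal'' simultaneously (i) lowers the value of $Y$ at which equidistribution is usable and (ii) keeps the geometry-of-numbers main term below $|\mathrm{Cl}_K|$. Balancing these two effects precisely, and propagating the dependence on $V_K$ through the optimization, is the heart of the proof, and it is also what forces $\delta$ to be small --- and, if the exceptional zero is handled via Siegel's theorem, possibly ineffective.
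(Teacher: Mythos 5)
Your opening observations are correct and do capture the paper's philosophy: the hypothesis \eqref{eqn:ClassNumberAssumption}, via the class number formula \eqref{eqn:ClassNumberFormula} and the Silverman regulator bound \eqref{eqn:RegulatorBound}, forces $\kappa_K \gg_{[K:\Q]} V_K^{[K:\Q]}(\log D_K)^{-1}(\log\log D_K)^{3[K:\Q]/2}$, and the paper does indeed trade on the equivalence ``$\kappa_K$ large $\Leftrightarrow$ many small non-inert unramified primes.'' But your proof then diverges at two crucial points, and both are gaps. First, you want to go from ``$\kappa_K$ large'' to ``many small prime ideals'' directly, via $\log\zeta_K(1+\tfrac{1}{\log y}) = \log\log y + \log\kappa_K + O_{[K:\Q]}(1)$. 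This identity is not true at the stated uniformity: $\zeta_K(1+\sigma) = \kappa_K/\sigma + (\text{analytic part})$, and the analytic part grows with $D_K$, so with $\sigma = 1/\log y$ and $y$ a small power of $D_K$ the remainder is not $O_{[K:\Q]}(1)$. Making this step rigorous is precisely where the work is. The paper handles it in the \emph{contrapositive} and far easier direction: it shows that \emph{few} small primes (quantified by a parameter $z$) force $\kappa_K \ll_{[K:\Q],\ell} (\log z)^{[K:\Q]}(\log D_K)^{-1}(\log\log D_K)^{[K:\Q]/2}$, which is a pure \emph{upper-bound} argument --- Mellin inversion on a smoothed count $S(x)=\sum_{n\le x}\lambda_K^{\flat}(n)\varphi_k(n/x)$, the standard convexity bound for $\zeta_K$ on the critical line (\cref{lem:Convexity}), the sifting identity in \cref{lem:Sift}, and an elementary Rankin-trick estimate of the smooth/rough parts of $S(x)$ (\eqref{eqn:KeyStep1}--\eqref{eqn:S-split}). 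No zero information is required. Comparing this upper bound with the lower bound forced by \eqref{eqn:ClassNumberAssumption} then forces $z \gg (\log D_K)^{3\delta V_K}$.

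Second, and more seriously, you abandon the ``many small primes'' thread entirely and instead propose to prove a from-scratch equidistribution result for ideals among classes, using the orthogonality of Hecke characters of $\mathrm{Cl}_K$, a log-free zero-density estimate for that family, and Deuring--Heilbronn repulsion (or Siegel) for an exceptional zero. This is not what the paper does, it is a dramatically heavier route, and it is not shown to be available at the needed uniformity: the family has $\asymp D_K^{1/2}$ characters, all of conductor $\asymp D_K$, and unconditional cancellation in $\sum_{\N\ka\le Y}\chi([\ka])$ for $Y$ down near $D_K^{1/2\ell([K:\Q]-1)}$, uniformly in the field, is essentially a subconvexity problem which the paper expressly does \emph{not} assume in \cref{thm:Main}. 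The paper sidesteps all of this: it takes the Ellenberg--Venkatesh bound $|\mathrm{Cl}_K[\ell]| \ll_{[K:\Q],\ell} \kappa_K\sqrt{D_K}/M$ (\cref{lem:EV}) as a black box with $M$ the count of small prime ideals of degree one, substitutes $M \asymp [K:\Q]\pi_{\Q}(z)$, and plugs in the forced $z \geq (\log D_K)^{3\delta V_K}$. Note also that you flag your approach as potentially ineffective because of Siegel's theorem, whereas \cref{thm:Main} is fully effective --- another symptom that the routes genuinely differ and yours, as sketched, cannot reproduce the stated result.
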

\begin{remark*}
Notice $V_K \ll_{[K:\Q]}  (\log D_K) (\log\log D_K)^{-3/2}$ according to \eqref{eqn:TrivialBound}. 
\end{remark*}

Our argument is soft and based on a particularly fruitful lemma of Ellenberg--Venkatesh. The basic principle is that if there are \textit{many} small non-inert unramified primes in $K$, then we save over the trivial bound. A refined version of their lemma (\cref{lem:EV}) illustrates a contrasting   principle: if the residue of the Dedekind zeta function $\zeta_K(s)$ is small, then  we also save over the trivial bound. We exploit that this residue  is small precisely when $K$ has \textit{few} small non-inert unramified  primes. By balancing the effects of these conflicting principles, we unconditionally save over the trivial bound. 
The residue's relationship to the class number via \eqref{eqn:ClassNumberFormula} is our source for the inverse relationship in \cref{thm:Main}.  

This observation also has conditional consequences which illustrate its limitations. The classic conditional benchmark is again due to  Ellenberg and Venkatesh \cite{Ellenberg-Venkatesh-ReflectionPrinciples} who showed that the generalized Riemann hypothesis (GRH) implies any fixed 
\begin{equation} \label{eqn:GRH-EV}
\Delta > \frac{1}{2}-\frac{1}{2\ell ([K:\Q]-1)}	
\end{equation}
is admissible in \eqref{eqn:PowerSavings}.  Heath-Brown \cite{HeathBrown-ClassGroups} very recently showed how one might relax their GRH hypothesis with a subconvexity hypothesis to deduce a number of intriguing results.  While his focus was on quadratic and cubic fields $K$, the argument applies to any number field as remarked therein. Combined with our observations, we obtain the following result.
 
\begin{theorem} \label{thm:Subconvexity}
 Let $K \neq \Q$ be a number field. Fix an integer $\ell \geq 2$ and real $\Delta > \frac{1}{2}-  \frac{1}{2\ell ([K:\Q]-1)}$. If there exists  $A \geq 1$ and $0 < \eta < 1$  such that  
 \begin{equation}
 \label{eqn:Assumption}
 |\zeta_K(\tfrac{1}{2}+it)| \ll_{[K:\Q],\ell,A,\eta} D_K^{\frac{1-\eta}{4\ell ([K:\Q]-1)}} |1+it|^A \quad \text{ for } t \in \R, 
 \end{equation}
then  
\[
|\mathrm{Cl}_K[\ell]| \ll_{[K:\mathbb{Q}],\ell,A,\eta,\Delta} D_K^{\Delta}. 
\]
The implied constant is  effective unless $K$ has a quadratic subfield and $\Delta \leq  \frac{1}{2}-\frac{\eta}{4\ell ([K:\Q]-1)}$.
\end{theorem}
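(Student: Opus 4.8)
The plan is to run the Ellenberg--Venkatesh argument in the quantitative form of \cref{lem:EV}, inserting the hypothesis \eqref{eqn:Assumption} in the place occupied by GRH in \cite{Ellenberg-Venkatesh-ReflectionPrinciples}; that a subconvexity bound suffices there was first observed by Heath-Brown \cite{HeathBrown-ClassGroups}, so what remains is to supply the requisite analytic input uniformly over a general number field $K$, so that the full exponent in \eqref{eqn:GRH-EV} is recovered. Write $n=[K:\Q]$. By \cref{lem:EV} the theorem reduces to a single estimate: for every fixed $\theta<\frac{1}{2\ell(n-1)}$ there are $\gg_{n,\ell,A,\eta,\theta}X/\log X$ unramified degree-one prime ideals $\kp$ of $K$ with $\N\kp\le X:=D_K^{\theta}$. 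Indeed, feeding such a count into \cref{lem:EV} (grouping the primes into $m$-fold products for a suitable fixed $m$) gives $|\mathrm{Cl}_K[\ell]|\ll_{n,\ell,\theta}D_K^{1/2-\theta+\epsilon}$, and letting $\theta\uparrow\frac{1}{2\ell(n-1)}$ and $\epsilon\downarrow0$ yields $|\mathrm{Cl}_K[\ell]|\ll D_K^{\Delta}$ for every $\Delta>\frac12-\frac{1}{2\ell(n-1)}$.

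For the prime count I would use a truncated explicit formula for $\psi_K(X)=\sum_{\N\ka\le X}\Lambda_K(\ka)$. Discarding the $O(X^{1/2}\log X)$ contribution of prime powers and of primes of residue degree at least two, one is left to show that $\sum_{\rho}X^{\rho}/\rho=o(X)$, the sum taken over nontrivial zeros $\rho$ of $\zeta_K$ with $|\Im\rho|\le T$ for $T=D_K^{c_0\theta}$ with $c_0>0$ small. Here \eqref{eqn:Assumption} enters through a zero-density estimate for $\zeta_K$ in the conductor aspect of the shape $N_K(\sigma,T)\ll_{n,\ell,A,\eta,\epsilon}\bigl(D_K(|T|+2)^{n}\bigr)^{c\lambda(1-\sigma)}(\log(D_K T))^{B}$, for some constant $c=c(n)$, which follows in the standard way from the pointwise subconvex bound on the line $\Re s=\tfrac12$, the convexity bound on $\Re s=1+\epsilon$, and the Phragm\'en--Lindel\"of principle; here $\lambda=\frac{1-\eta}{4\ell(n-1)}$ is the exponent of \eqref{eqn:Assumption}, and the numerology is such that summation by parts over the zeros gives $\sum_{\rho}X^{\rho}/\rho=o(X)$ for every $\theta<\frac{1}{2\ell(n-1)}$, the factor $1-\eta$ supplying exactly the slack needed to beat the logarithmic losses and the strictness of the inequality.

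The sole obstruction, and the source of the ineffectivity caveat, is a possible real zero $\beta_1$ of $\zeta_K$ very close to $s=1$, whose contribution $-X^{\beta_1}/\beta_1$ to $\psi_K(X)$ can annihilate the main term $X$ unless $1-\beta_1\gg1/\log D_K$. Such a zero can only arise from the quadratic character cutting out a quadratic subfield $k\subseteq K$ --- of conductor at most $D_K^{2/n}$ by the conductor--discriminant formula --- and there is at most one, by the Landau--Page theorem. If $K$ has no quadratic subfield, $\zeta_K$ enjoys the standard effective zero-free region, every step above is effective, and one obtains the theorem effectively for all $\Delta>\frac12-\frac{1}{2\ell(n-1)}$. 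If $K$ does have a quadratic subfield, one instead removes $\beta_1$ by hand and bounds $1-\beta_1$ from below: the Deuring--Heilbronn phenomenon repels the remaining zeros of $\zeta_K$, so a partial form of the prime count survives, and where that is insufficient one exploits that a $\beta_1$ close to $1$ forces $L(1,\chi_k)$, hence the residue $\kappa_K$, hence by the class number formula \eqref{eqn:ClassNumberFormula} the quantity $|\mathrm{Cl}_K|$ itself, to be correspondingly small. This trade-off can be made effective when $\Delta>\frac12-\frac{\eta}{4\ell(n-1)}$, where the subconvex saving in \eqref{eqn:Assumption} is large enough to localize $\beta_1$ on its own; below that threshold one must appeal to Siegel's ineffective bound $1-\beta_1\gg_{\epsilon}D_K^{-\epsilon}$, which still suffices for the smaller admissible values of $\theta$ but only with an ineffective implied constant. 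I expect the hard part to be precisely this endgame: keeping the zero-density estimate sharp and uniform enough to push $\theta$ all the way up to $\frac{1}{2\ell(n-1)}$, and then tracking the effectivity through the exceptional-zero case so that the dividing line falls exactly at $\frac12-\frac{\eta}{4\ell(n-1)}$.
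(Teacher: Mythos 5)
Your proposal is correct in its high-level strategy but takes a genuinely different (and substantially more technical) route than the paper. You go through the truncated explicit formula for $\psi_K$, which requires you to manufacture a zero-density estimate for $\zeta_K$ in the conductor aspect from the hypothesis \eqref{eqn:Assumption}, and then to handle a possible exceptional zero via Deuring--Heilbronn repulsion. The paper avoids the explicit formula and zero-density estimates entirely: it studies a smoothed Mellin transform $S(x)=\sum_{n\le x}\lambda^\flat_K(n)\varphi_k(n/x)$, shifts the contour to $\Re(s)=\tfrac12$, and inserts \eqref{eqn:Assumption} \emph{directly} as an upper bound on the integrand; the only residue picked up is $\kappa_K H_K(1,x)x$, so one obtains the clean identity $S(x)/x = \kappa_K H_K(1,x)/2^{[K:\Q]} + O\bigl(D_K^{(-\eta+\delta)/4\ell([K:\Q]-1)}\bigr)$ after one contour shift. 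For the effectivity dichotomy, the paper does not localize $\beta_1$ at all: case~(iii) simply rearranges the Mellin identity into an \emph{upper} bound for $\kappa_K$, plugs that into the Ellenberg--Venkatesh bound $|\mathrm{Cl}_K[\ell]|\ll\kappa_K D_K^{1/2}/N^\flat_K(x)$, and uses $S(x)\le N^\flat_K(x)$ together with $N^\flat_K(x)\ge 1$ — the threshold $\tfrac12-\tfrac{\eta}{4\ell([K:\Q]-1)}$ is exactly where the Mellin error, divided by $x$, balances the desired power saving, not where a subconvexity bound ``localizes $\beta_1$ on its own.'' Also note the paper leans on Heath-Brown's observation that \cref{lem:EV} works with squarefree-norm \emph{integral} ideals coprime to $\mathfrak{d}_K$, so the full partial sum of $\lambda^\flat_K$ suffices and no prime-counting argument or grouping into $m$-fold products is needed.

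The genuine risk in your route is the zero-density input. You assert a bound of the shape $N_K(\sigma,T)\ll(D_K T^n)^{c\lambda(1-\sigma)}(\log D_KT)^B$ with $\lambda=\tfrac{1-\eta}{4\ell(n-1)}$, ``which follows in the standard way,'' and then claim the numerology closes for all $\theta<\tfrac{1}{2\ell(n-1)}$. That is precisely the step that needs to be checked: the constant $c$ coming out of the Littlewood/Montgomery-type argument from a pointwise subconvex bound is not automatically small enough to allow $X=D_K^\theta$ all the way up to $\theta=\tfrac{1}{2\ell(n-1)}$, and the exceptional-zero endgame, as you describe it, is sketchier still. None of this is obviously false, but it is a lot of unverified machinery in a place where the paper does something far simpler. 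If you want to pursue this route, the zero-density estimate and the exact effectivity threshold are the two things you would need to nail down; otherwise the Mellin-inversion argument is the path of least resistance.
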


\cref{thm:Subconvexity} recovers \eqref{eqn:GRH-EV} under a subconvexity hypothesis which is notably weaker than GRH or the generalized Lindel\"{o}f hypothesis (and one can weaken our hypothesis even further as shown in \cite{HeathBrown-ClassGroups}), but the implied constants are ineffective if $K$ contains a quadratic subfield. Indeed, as shown in \cref{sec:SubconvexityProof}, this ineffective form follows quickly from Heath-Brown's arguments and the ineffective Brauer--Siegel theorem. By supplementing this argument with our observations leading to \cref{thm:Main}, we are able to deduce the effective power savings described in \cref{thm:Subconvexity} for all number fields. See \cref{rem:Effective} for more details on achieving effective power savings. 
 
\section*{Acknowledgements}
	 We would like to thank Roger Heath-Brown, Greg Martin, Lillian Pierce, Paul Pollack, Arul Shankar,  Jesse Thorner, Jacob Tsimerman, Caroline Turnage-Butterbaugh, and Jiuya Wang  for their encouragement and comments.  RJLO was partially supported by NSF grant DMS-2200760.  AZ was partially supported by NSERC grant RGPIN-2022-04982. 

\section{Preliminaries}
	\label{sec:prelim}
For a number field $K$, denote:
\begin{itemize}
	\item $[K:\Q]$ to be the degree of $K$
	\item $\mathcal{O}_K$ to be the ring of integers of $K$
	\item $\N = \mathrm{Nm}_{K/\Q}$ to be the absolute norm on ideals
	\item $\mathfrak{d}_K$ to be the absolute different ideal  of $K$ 
	\item $D_K = \N\mathfrak{d}_K = |\mathrm{Disc}(K/\Q)|$ to be the norm of the absolute different ideal
	\item $\zeta_K(s)$ to be the Dedekind zeta function of $K$
	\item $\kappa_K$ to be the residue of $\zeta_K(s)$ at $s=1$
\end{itemize}
The letters $\km, \kn$ will be reserved to denote integral ideals of $K$  whereas $\kp$   will be reserved for prime ideals of $K$. Similarly, letters $m,n$ will denote rational integers and the letter $p$ will denote a rational prime.  

We begin by recording the key lemma of Ellenberg--Venkatesh including a refinement involving $\kappa_K$ and a new observation of Heath-Brown \cite[Lemma 3]{HeathBrown-ClassGroups}.

\begin{lemma} \label{lem:EV}
Let $K \neq \Q$ be a number field. Fix an integer $\ell \geq 2$ and $0 < \eta < 1$. If $M$ is the number of integral ideals  $\kn$ of $K$ such that $\kn$ is relatively prime to the different ideal $\mathfrak{d}_K$, its norm $\N\kn$ is a squarefree integer, and $\mathrm{N}\kn \leq   D_K^{\frac{1-\eta}{2\ell([K:\Q]-1)}}$,  then 
\[
|\mathrm{Cl}_K[\ell]| \ll_{\eta,\ell,[K:\mathbb{Q}]} \frac{\kappa_K \sqrt{D_K}}{M}  . 
\]
\end{lemma}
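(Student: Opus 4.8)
The plan is to prove Lemma~\ref{lem:EV} by following the Ellenberg--Venkatesh strategy of ``spreading out'' an $\ell$-torsion class over many auxiliary ideals, then refining the final counting step with the residue $\kappa_K$ in place of a crude ideal-counting bound. Recall the core mechanism: if $[\ka]$ is an ideal class with $[\ka]^\ell$ trivial, then $\ka^\ell = (\alpha)$ is principal, and for any integral ideal $\kn$ in the same class as $\ka$, we get $\kn^\ell = (\beta)$ with $\N(\beta) = \N(\kn)^\ell$. The upshot of their argument (and Heath-Brown's Lemma~3) is that if $\kn$ ranges over ideals of norm up to $X$ that are squarefree (in the sense that $\N\kn$ is a squarefree rational integer) and prime to $\mathfrak{d}_K$, with $X \leq D_K^{\frac{1-\eta}{2\ell([K:\Q]-1)}}$, then each ideal class contains $\gg_{\eta,\ell,[K:\Q]} M / |\mathrm{Cl}_K|$ such ideals on average, but more importantly the total number of such ideals across \emph{all} classes in $\mathrm{Cl}_K[\ell]$ forces a bound. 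Concretely, one shows that the number of such $\kn$ lying in classes belonging to $\mathrm{Cl}_K[\ell]$ is at least $\gg M \cdot |\mathrm{Cl}_K[\ell]| / |\mathrm{Cl}_K|$ (by equidistribution of such ideals among classes, which is where the squarefree and coprimality conditions and the exponent $\frac{1-\eta}{2\ell([K:\Q]-1)}$ enter, via a geometry-of-numbers / counting-in-boxes input), while simultaneously each such $\kn$ can be detected by the principality of $\kn^\ell$, and the number of principal ideals $(\beta)$ with $\N(\beta) \leq X^\ell \leq D_K^{(1-\eta)/2}$ is bounded.

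\textbf{First} I would set $X = D_K^{\frac{1-\eta}{2\ell([K:\Q]-1)}}$ and let $\mathcal{N}$ be the set of integral ideals $\kn$ prime to $\mathfrak{d}_K$ with $\N\kn$ squarefree and $\N\kn \leq X$, so $|\mathcal{N}| = M$. \textbf{Second}, partition $\mathcal{N}$ by ideal class. The key equidistribution statement is that $\mathcal{N}$ is spread roughly evenly among the $|\mathrm{Cl}_K|$ classes; this is standard once $X$ is not too small relative to $D_K$ --- here it is even a positive power of $D_K$ --- and it yields that the classes in $\mathrm{Cl}_K[\ell]$ collectively contain $\gg_{\eta,\ell,[K:\Q]} M\,|\mathrm{Cl}_K[\ell]|/|\mathrm{Cl}_K|$ elements of $\mathcal{N}$. \textbf{Third}, I would invoke the Ellenberg--Venkatesh injectivity: using the exponent $\ell([K:\Q]-1)$ in the denominator (so that $X^\ell \leq D_K^{(1-\eta)/2} < \sqrt{D_K}$, and more precisely so that the relevant box of small algebraic integers is controlled), one shows that the map $\kn \mapsto \kn^\ell = (\beta_\kn)$, followed by choosing a bounded-height generator $\beta_\kn$, is finite-to-one onto a set of principal ideals of norm $\leq X^\ell$; the squarefree condition guarantees each target ideal has boundedly many preimages. \textbf{Fourth}, the number of integral (principal) ideals of $K$ of norm $\leq X^\ell$ is $\ll \kappa_K X^\ell + O(\text{lower order})$ by the standard ideal-counting asymptotic (Landau), and since $X^\ell$ is a power of $D_K$ bounded away from $\sqrt{D_K}$, the main term $\kappa_K X^\ell$ dominates. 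Combining: $M\,|\mathrm{Cl}_K[\ell]|/|\mathrm{Cl}_K| \ll_{\eta,\ell,[K:\Q]} \kappa_K X^\ell$, and then substituting $|\mathrm{Cl}_K| \asymp_{[K:\Q]} \kappa_K \sqrt{D_K}/R_K$ together with $R_K \geq 1$ (or absorbing $R_K$ appropriately) yields
\[
|\mathrm{Cl}_K[\ell]| \ll_{\eta,\ell,[K:\Q]} \frac{\kappa_K \sqrt{D_K}}{M} \cdot \frac{X^\ell}{\sqrt{D_K}} \cdot \frac{1}{R_K} \cdot (\kappa_K \text{ factors}),
\]
and the choice of $X$ makes $X^\ell/\sqrt{D_K} \leq D_K^{-\eta/2} \leq 1$, leaving exactly $|\mathrm{Cl}_K[\ell]| \ll \kappa_K\sqrt{D_K}/M$ as claimed.

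\textbf{The main obstacle} I anticipate is the third step: making the ``spreading out'' map genuinely finite-to-one with a constant depending only on $\ell,\eta,[K:\Q]$. This is precisely the content of the Ellenberg--Venkatesh lemma as refined by Heath-Brown's Lemma~3 --- the subtlety is that distinct ideals $\kn_1,\kn_2$ in the same class with $\N\kn_i$ squarefree can have $\kn_1^\ell$ and $\kn_2^\ell$ generate the same principal ideal only if $\kn_1/\kn_2$ is a principal fractional ideal generated by a unit times something of small height, and bounding the number of such requires a geometry-of-numbers argument in the Minkowski embedding, controlling short vectors in the lattice of $\mathcal{O}_K$; the squarefree condition on the norm is exactly what prevents pathological accumulation. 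Since the statement attributes this refinement to \cite{HeathBrown-ClassGroups}, I would lean on that citation for the delicate counting and focus my exposition on correctly tracking how $\kappa_K$ threads through the final inequality, since replacing the trivial ``number of ideals of norm $\leq Y$ is $\ll_\epsilon Y^{1+\epsilon}$'' with the sharp ``$\ll \kappa_K Y + \text{error}$'' is the one genuinely new ingredient here and is what produces the $\kappa_K$ in the conclusion rather than an extra $D_K^\epsilon$.
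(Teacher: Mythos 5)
Your proposal tries to reconstruct the mechanism behind the Ellenberg--Venkatesh lemma rather than invoke it directly, and the reconstruction has a genuine gap. The ``equidistribution'' step is false at the scale in question: with $X = D_K^{\frac{1-\eta}{2\ell([K:\Q]-1)}}$, the total number of integral ideals of norm $\leq X$ is $O(\kappa_K X)$, which is typically far smaller than $|\mathrm{Cl}_K| \asymp \kappa_K \sqrt{D_K}/R_K$, so there are simply not enough ideals for them to be spread among the classes, let alone equidistributed. Ellenberg and Venkatesh do not claim or use equidistribution of ideals in classes; their argument is a volume/packing argument in the Arakelov class group $\widetilde{\mathrm{Cl}_K}$: each $\ell$-torsion class contributes an essentially disjoint neighborhood whose volume is $\gg M$, and the total volume is $\ll h_K R_K$. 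That is the bound $|\mathrm{Cl}_K[\ell]| \ll \mathrm{vol}(\widetilde{\mathrm{Cl}_K})/\mathrm{vol}(P_\ell/P)$ quoted in the paper's one-line proof, with $\mathrm{vol}(P_\ell/P) \geq M$ being Heath-Brown's observation. The factor $\kappa_K$ then enters \emph{only} via the class number formula applied to $\mathrm{vol}(\widetilde{\mathrm{Cl}_K}) \ll h_K R_K \asymp_{[K:\Q]} \kappa_K\sqrt{D_K}$, not through a count of principal ideals of norm $\leq X^\ell$.

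Your final algebra also does not close. If you bound the number of principal ideals of norm $\leq X^\ell$ by $\ll \kappa_K X^\ell$ (which is already the count of \emph{all} ideals, not just principal ones) and combine with $M\,|\mathrm{Cl}_K[\ell]|/|\mathrm{Cl}_K| \ll \kappa_K X^\ell$ and $|\mathrm{Cl}_K| \asymp \kappa_K\sqrt{D_K}/R_K$, you obtain $|\mathrm{Cl}_K[\ell]| \ll \kappa_K^2 X^\ell\sqrt{D_K}/(R_K M)$, which exceeds the target $\kappa_K\sqrt{D_K}/M$ by a factor $\kappa_K X^\ell/R_K$; this factor is a positive power of $D_K$ and is \emph{not} $\ll 1$, contrary to your claim that ``the choice of $X$ makes $X^\ell/\sqrt{D_K}\leq 1$'' — that estimate does not absorb the stray $\kappa_K$ and $R_K^{-1}$. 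If instead you try to count only principal ideals (dividing by $h_K$), the main term becomes so small that the error term in the ideal-counting asymptotic dominates, and the asymptotic is unusable. The paper sidesteps all of this by treating $\mathrm{vol}(\widetilde{\mathrm{Cl}_K}) \ll h_K R_K$ as a single upper bound and never separately counting principal ideals; the squarefree and coprime-to-$\mathfrak{d}_K$ hypotheses on $\kn$ belong to Heath-Brown's refinement of the lower bound on $\mathrm{vol}(P_\ell/P)$ and are properly left to that citation, as you anticipated in your ``main obstacle'' remark — but the surrounding equidistribution narrative and the bookkeeping of $\kappa_K$ need to be replaced by the Arakelov volume argument.
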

\begin{proof}
Using the notation of Lemma 3 in Ellenberg--Venkatesh \cite{Ellenberg-Venkatesh-ReflectionPrinciples}, they showed
\[
|\mathrm{Cl}_K[\ell]| \ll_{\eta,\ell,[K:\mathbb{Q}]} \frac{\mathrm{vol}(\widetilde{\mathrm{Cl}_K})}{\mathrm{vol}(P_{\ell}/P)}.
\]
They estimate  $\mathrm{vol}(\widetilde{\mathrm{Cl}_K})$, the volume of the Arakelov class group, from above by the class number of $K$ times the regulator of $K$, times some constant depending at most on the degree of $K$. By the class number formula, this quantity is at most $\ll_{[K:\mathbb{Q}]} \kappa_K \sqrt{D_K}$.  The volume $\mathrm{vol}(P_{\ell}/P)$ was shown to be bounded below by the number of  prime ideals $\mathfrak{p}$ of degree one which do not ramify over $\mathbb{Q}$ with norm $\mathrm{N}\mathfrak{p} \leq D_K^{\frac{1-\eta}{2\ell ([K:\Q]-1)}}$. Heath-Brown observed that $\mathrm{vol}(P_{\ell}/P)$ can instead be bounded below by the number of integral ideals $\mathfrak{n}$ satisfying the assumptions stated in the lemma.  
\end{proof}

The Dedekind zeta function $\zeta_K(s)$ can be written in two forms. First, it can be written over the number field $K$ using ideals as 
\begin{equation} \label{eqn:DedekindZeta-K}
\zeta_K(s) = \sum_{\kn \subseteq \mathcal{O}_K} (\N\kn)^{-s} = \prod_{\kp} \Big(1 - (\N\kp)^{-s} \Big)^{-1} \quad \text{ for } \Re(s) > 1,
\end{equation}
where the product runs over prime ideals $\kp$ of $K$.  Second, it can be written over the base field $\Q$ using integers  as
\begin{equation} \label{eqn:DedekindZeta-Q}
\zeta_K(s) = \sum_{n=1}^{\infty} \lambda_K(n) n^{-s} = \prod_{p} \Big(1 + \sum_{j=1}^{\infty} \frac{\lambda_K(p^j)}{p^{js}} \Big) \quad \text{ for } \Re(s) > 1,
\end{equation}
where $\lambda_K$ is a multiplicative function on the positive integers satisfying  
\begin{equation} \label{eqn:Coefficients}
\lambda_K(n) = \sum_{\substack{\N\kn = n } } 1 \quad \text{ and } \quad \lambda_K(p^j) \leq [K:\Q]^j  	
\end{equation}
for all $n,j \geq 1$ and primes $p$. The second equality holds because at most $[K:\Q]$ prime ideals $\kp$ lie above a rational prime $p$. We shall use the latter form \eqref{eqn:DedekindZeta-Q} to count ideals with squarefree norm and without prime ideals dividing the different. 

Let $\mu$ be the classical M\"{o}bius function on the integers, so $\mu^2$ is the indicator function on squarefree integers. Define the non-negative multiplicative function $\lambda^{\flat}_K$ by
\begin{equation} \label{eqn:Coefficients-Flat}
\lambda^{\flat}_K(n) := \mu^2(n) \sum_{\substack{\N\kn = n \\ (\kn,\mathfrak{d}_K)=1} } 1, \quad \text{ so} \quad 0 \leq \lambda^{\flat}_K(n) \leq \lambda_K(n)
\end{equation}
 for all $n \geq 1$. At all primes $p$, we have that 
\begin{equation} \label{eqn:PrimeBound}
0 \leq \lambda_K(p)-\lambda^{\flat}_K(p) \leq  \frac{[K:\Q]}{2}
\end{equation}
since 
\[
\lambda_K(p)-\lambda^{\flat}_K(p) = |\{ \kp   :  \N\kp = p \text{ and } \kp \mid \mathfrak{d}_K  \}| = \sum_{\substack{ \N\kp = p \\ e_{\kp} \geq 2} } 1 \leq \frac{1}{2} \sum_{\substack{ \kp \subseteq (p) } } e_{\kp} f_{\kp}  = \frac{[K:\Q]}{2}. 
\]
where $e_{\kp}$ and $f_{\kp}$ are respectively the ramification index and inertia degree of $\kp$ over $(p)$. In particular, at unramified primes, 
\begin{equation} \label{eqn:UnramifiedBound}
	\lambda^{\flat}_K(p) = \lambda_K(p) \quad \text{when $p \nmid D_K$.}
\end{equation}

\begin{lemma} \label{lem:Sift}
	Let $K \neq \Q$ be a number field. For $x \geq 3$ and $s \in \C$, define 
	\begin{equation} \label{eqn:H}
	H_K(s, x) := \prod_{p \leq x} \Big(1+\frac{\lambda^{\flat}_K(p)}{p^s}\Big) \Big( 1+ \sum_{j=1}^{\infty} \frac{\lambda_K(p^j)}{p^{js}} \Big)^{-1}.
	\end{equation}
	Then  $H_K(s,x)$ is entire, 
	\[
	H_K(1,x) \gg_{[K:\Q]} 	(\log \log D_K)^{-[K:\Q]/2},
	\]
	and there exists a constant $C = C_{[K:\Q]} > 0$ such that for $t \in \R$,  
	\[
	|H_K(\tfrac{1}{2}+it,x)| \ll_{[K:\Q]} \big( e^{(\log D_K)^{1/2} } \log x \big)^{C}.
	\]
\end{lemma}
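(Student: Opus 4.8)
The plan is to analyze $H_K(s,x)$ factor by factor over primes $p \leq x$, splitting the primes into the ramified ones (dividing $D_K$) and the unramified ones. First I would observe that the local Euler factor $1 + \sum_{j\geq 1} \lambda_K(p^j)p^{-js}$ is the $p$-part of $\zeta_K(s)$, hence nonvanishing and in fact bounded away from $0$ uniformly for $\Re(s) \geq 1/2 + \text{(something)}$... but since we only need $\Re(s) = 1/2$ and $\Re(s) = 1$, I would instead write the local factor as $\prod_{\kp \mid p}(1 - (\N\kp)^{-s})^{-1}$ using \eqref{eqn:DedekindZeta-K}, so its reciprocal is the polynomial $\prod_{\kp\mid p}(1 - (\N\kp)^{-s})$, which is manifestly entire in $s$; multiplying by the polynomial $1 + \lambda^\flat_K(p)p^{-s}$ keeps each local factor entire, and since there are finitely many primes $p \leq x$, the finite product $H_K(s,x)$ is entire. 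That settles the first assertion.

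For the lower bound at $s=1$, I would take logarithms. For \emph{unramified} $p \leq x$ we have $\lambda^\flat_K(p) = \lambda_K(p)$ by \eqref{eqn:UnramifiedBound}, so the $p$-th log-factor is $\log(1 + \lambda_K(p)/p) - \log(1 + \sum_{j\geq 1}\lambda_K(p^j)p^{-j}) = -\sum_{j\geq 2}\lambda_K(p^j)p^{-j} + O(1/p^2)$, which is $O_{[K:\Q]}(1/p^2)$ by \eqref{eqn:Coefficients}; summing over all unramified $p$ gives a bounded contribution. For \emph{ramified} $p \mid D_K$, the log-factor is still $\geq -\log(1 + \sum_{j\geq 1}\lambda_K(p^j)p^{-j}) \geq -C_1[K:\Q]/p$ for an absolute $C_1$ (using $\lambda_K(p^j) \leq [K:\Q]^j$ and $p \geq 2$, hence a geometric series bound), while the $1 + \lambda^\flat_K(p)/p^s$ part only helps. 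So $\log H_K(1,x) \geq -O_{[K:\Q]}(1) - O_{[K:\Q]}\big(\sum_{p \mid D_K} 1/p\big)$. The key point is that the number of distinct prime divisors of $D_K$ is $\ll \log D_K / \log\log D_K$, so the worst case for $\sum_{p\mid D_K}1/p$ is when $D_K$'s prime factors are the first $\omega$ primes with $\omega \asymp \log D_K/\log\log D_K$, giving $\sum_{p\mid D_K} 1/p \ll \log\log\omega \ll \log\log\log D_K$... wait — that's too strong. Let me reconsider: actually $\sum_{p\le y}1/p \asymp \log\log y$, and the primes dividing $D_K$ number at most $\omega \asymp \log D_K/\log\log D_K$, so they lie among $p \leq y$ with $y \asymp \log D_K$ in the extremal case (the $\omega$-th prime is $\asymp \omega\log\omega \asymp \log D_K$), hence $\sum_{p\mid D_K}1/p \leq \sum_{p \leq O(\log D_K)} 1/p \ll \log\log\log D_K \ll \frac{1}{2}\log\log D_K$. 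That gives $\log H_K(1,x) \geq -\tfrac12[K:\Q]\log\log D_K - O_{[K:\Q]}(1)$ with room to spare, i.e. $H_K(1,x) \gg_{[K:\Q]} (\log\log D_K)^{-[K:\Q]/2}$, as claimed (the constant $[K:\Q]/2$ is comfortably met).

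For the upper bound at $s = \tfrac12 + it$, I would again take absolute values factor by factor. Each factor $|1 + \lambda^\flat_K(p)p^{-1/2-it}| \leq 1 + [K:\Q]p^{-1/2} \leq \exp([K:\Q]p^{-1/2})$, and the reciprocal factor $|1 + \sum_{j\geq 1}\lambda_K(p^j)p^{-j(1/2+it)}|^{-1} = \prod_{\kp\mid p}|1 - (\N\kp)^{-1/2-it}| \leq \prod_{\kp\mid p}(1 + (\N\kp)^{-1/2}) \leq (1 + p^{-1/2})^{[K:\Q]} \leq \exp([K:\Q]p^{-1/2})$ — here I'm bounding the reciprocal of the Euler factor, which is a polynomial, directly by triangle inequality rather than needing nonvanishing. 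So $|H_K(\tfrac12+it,x)| \leq \exp\big(2[K:\Q]\sum_{p\leq x}p^{-1/2}\big)$. Now $\sum_{p\leq x}p^{-1/2} \ll \sqrt{x}/\log x$; this is already $\ll (\sqrt{x}\log x)$ crudely, but we want the bound $\big(e^{(\log D_K)^{1/2}}\log x\big)^C$. The exponent $2[K:\Q]\sum_{p\leq x}p^{-1/2}$ does not obviously involve $D_K$ — so I suspect the intended bound separates the ramified primes (whose contribution involves $D_K$) from the unramified ones. Reconsidering: for unramified $p$, $\lambda^\flat_K(p)=\lambda_K(p)$, so the factor simplifies to $(1+\lambda_K(p)p^{-1/2-it})\prod_{\kp\mid p}(1-(\N\kp)^{-1/2-it})$, and one can hope for more cancellation; but a safe uniform bound is still $\exp(O_{[K:\Q]}(p^{-1/2}))$ per prime, summing to $\exp(O_{[K:\Q]}(\sqrt x/\log x))$. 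The factor $e^{(\log D_K)^{1/2}}$ presumably absorbs the ramified primes: $\sum_{p\mid D_K}p^{-1/2} \leq \sqrt{\sum_{p\mid D_K}1}\cdot\sqrt{\sum_{p\mid D_K}p^{-1}} \ll \sqrt{\log D_K/\log\log D_K}\cdot\sqrt{\log\log\log D_K} \ll (\log D_K)^{1/2}$, so the ramified primes contribute $\exp(O_{[K:\Q]}((\log D_K)^{1/2}))$. Combining, $|H_K(\tfrac12+it,x)| \ll_{[K:\Q]} \exp\big(O_{[K:\Q]}((\log D_K)^{1/2} + \sqrt x/\log x)\big)$; since $\sqrt x/\log x = o((\log x)^C \cdot e^{(\log D_K)^{1/2}})$ is \emph{not} immediate, I expect the lemma is applied only for $x$ in a range (presumably $x \leq$ some power of $\log D_K$ or $x \leq D_K^{o(1)}$) where $\sqrt x$ is absorbed — but re-examining, note $\sqrt x \leq (\log x)^C$ fails for large $x$, so the stated form $\big(e^{(\log D_K)^{1/2}}\log x\big)^C$ must come from a sharper treatment of the unramified part using partial summation against $\sum_{p\leq x}\lambda_K(p)p^{-1/2}$ together with a zero-free-region/Landau-type bound $\sum_{p\leq x}(\lambda_K(p)-1)p^{-1/2} \ll \log x$ plus the $D_K$-contribution of ramified primes. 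The main obstacle is precisely this: getting the unramified sum down to $\log x$ (rather than $\sqrt x/\log x$) requires exploiting that $\sum_p \lambda_K(p)p^{-s}$ is, up to $\zeta(s)$, given by $\log\zeta_K(s)/\zeta(s)$ which is regular and of polynomial growth near $s=1/2$ — i.e. invoking convexity/Landau bounds for $\zeta_K$ on the critical line, with the $e^{(\log D_K)^{1/2}}$ being the conductor contribution from the functional equation. I would structure the proof so that the unramified primes are handled via a contour shift and a standard convexity bound $|\zeta_K(\sigma+it)| \ll (D_K(1+|t|))^{O(1)}$, extracting the $(\log D_K)^{1/2}$ and $\log x$ from the standard explicit-formula error terms, and the ramified primes are handled by the crude Cauchy–Schwarz bound above.
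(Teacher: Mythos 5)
Your argument for entirety and for the lower bound $H_K(1,x) \gg_{[K:\Q]} (\log\log D_K)^{-[K:\Q]/2}$ both track the paper's proof. You correctly isolate the ramified primes $p \mid D_K$, exploit $\lambda^{\flat}_K(p) = \lambda_K(p)$ at unramified $p$ to show the corresponding local log-factors are $O_{[K:\Q]}(p^{-2})$, and bound $\sum_{p \mid D_K} 1/p \ll \log\log\log D_K$ via the observation that the number of distinct prime divisors of $D_K$ is $\ll \log D_K / \log\log D_K$. (One small quibble: your ``geometric series'' justification of $\log\bigl(1 + \sum_{j \geq 1} \lambda_K(p^j)p^{-j}\bigr) \ll [K:\Q]/p$ assumes $p > [K:\Q]$; the finitely many primes $p \leq [K:\Q]$ should be handled separately, e.g.\ by noting the local factor is trivially $O_{[K:\Q]}(1)$ there.)

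The upper bound at $s = \tfrac12 + it$ is where you lose the thread. Bounding each local factor crudely by $\exp(O_{[K:\Q]}(p^{-1/2}))$ yields $\exp(O_{[K:\Q]}(\sqrt{x}/\log x))$, which is far larger than claimed; you recognize this and then cast about for partial summation, Landau-type bounds, convexity estimates for $\zeta_K$ on the critical line, and contour shifts. None of that is needed, and you already have the key step in hand from your own lower-bound argument. For unramified $p$, the cancellation you used at $s = 1$ works identically at any $\Re(s) > 1/4$: the Euler factor is $1 + \lambda_K(p)p^{-s} + O_{[K:\Q]}(p^{-2\Re s})$, the leading term cancels against the numerator $1 + \lambda^{\flat}_K(p)p^{-s}$, and the local factor becomes $1 + O_{[K:\Q]}(p^{-2\Re s})$. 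At $\Re(s) = 1/2$ this is $1 + O_{[K:\Q]}(1/p)$, and Mertens' estimate $\sum_{p \leq x} 1/p \ll \log\log x$ shows the unramified product is $(\log x)^{O_{[K:\Q]}(1)}$ --- exactly the $(\log x)^C$ factor. The ramified primes then contribute the uncancelled first-order term $(\lambda_K(p) - \lambda^{\flat}_K(p))p^{-1/2} \leq \tfrac{[K:\Q]}{2} p^{-1/2}$ by \eqref{eqn:PrimeBound}, and your estimate $\sum_{p \mid D_K} p^{-1/2} \ll (\log D_K)^{1/2}$ --- the same one the paper uses --- produces the $e^{O_{[K:\Q]}((\log D_K)^{1/2})}$ factor. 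In short, both bounds on $H_K$ flow from a single local Taylor expansion of the factors of $H_K$, evaluated at $\Re(s) = 1$ and $\Re(s) = 1/2$ respectively; no analytic input about $\zeta_K$ itself is required.
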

\begin{proof}  Since 
\[
\Big( 1+ \sum_{j=1}^{\infty} \frac{\lambda_K(p^j)}{p^{js}} \Big)^{-1} = \prod_{\kp \mid (p)} \big(1 - (\N\kp)^{-s} \big),
\]
the function $H_K(s,x)$ is a finite product of entire functions and hence entire. For the bounds on $H_K(s,x)$, we start with an estimate for each local factor. For all primes $p$ and $\Re(s) > 1/4$,  we have uniformly that  
\[
\Big(1+\frac{\lambda^{\flat}_K(p)}{p^s}\Big) \Big( 1+ \sum_{j=1}^{\infty} \frac{\lambda_K(p^j)}{p^{js}} \Big)^{-1} = 1 - \frac{\lambda_K(p)-\lambda^{\flat}_K(p)}{p^s} + O_{[K:\Q]}\big(p^{-2\Re(s)}\big)
\]
by \eqref{eqn:DedekindZeta-Q} and the inequality $0 \leq \lambda_K^{\flat}(p) \leq \lambda_K(p)$ from \eqref{eqn:Coefficients-Flat}. The expansion $\log(1+u) = u + O(u^2)$ for $|u| < 1/2$ gives 
	\begin{equation} \label{eqn:H-Estimate}
	H_K(s,x) 
		 = \exp\Big( -\sum_{\substack{p \leq x} } \frac{\lambda_K(p)-\lambda^{\flat}_K(p)}{p^s} + O_{[K:\Q]}\Big(\sum_{p \leq x} p^{-2\Re(s)}\Big) \Big). 
	\end{equation}
At $s=1$, we have by \eqref{eqn:PrimeBound},\eqref{eqn:UnramifiedBound}, and the prime number theorem that 
	\[
	\sum_{p \leq x} \frac{\lambda_K(p)-\lambda^{\flat}_K(p)}{p} \leq \frac{[K:\Q]}{2} \sum_{p \mid D_K} \frac{1}{p} \leq \frac{[K:\Q]}{2} \log\log \log D_K + O_{[K:\Q]}(1). 
	\]
 The lower bound for $H_K(1,x)$ now follows from \eqref{eqn:H-Estimate} with the observation $\sum_{p \leq x} p^{-2} \ll 1$. At $s = \frac{1}{2}+it$ for  $t \in \R$, we again have by \eqref{eqn:PrimeBound}, \eqref{eqn:UnramifiedBound}, and the prime number theorem that 
	\[
	\sum_{p \leq x} \frac{|\lambda_K(p)-\lambda^{\flat}_K(p)|}{p^{1/2}} \leq \frac{[K:\Q]}{2} \sum_{p \mid D_K} \frac{1}{p^{1/2}}   \ll_{[K:\Q]}  (\log D_K)^{1/2} . 
	\]
	The last estimate is crude but sufficient. The upper bound for $|H_K(\tfrac{1}{2}+it,x)|$ now follows from \eqref{eqn:H-Estimate} and the estimate $\sum_{p \leq x} p^{-1} \ll \log\log x$ following from Mertens' formula. 
\end{proof}

We also record a classic convexity estimate and an explicit test function.

\begin{lemma} \label{lem:Convexity}
	Let $K$ be a number field. Fix $\delta > 0$. For $t \in \R$,
	\[
	\big| \zeta_K(\tfrac{1}{2}+it)\big| \ll_{[K:\Q],\delta} \big( D_K |1+it|^{[K:\mathbb{Q}]} \big)^{1/4+\delta}. 
	\] 
\end{lemma}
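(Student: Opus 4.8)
The plan is to prove \cref{lem:Convexity} by the Phragmén–Lindelöf principle applied to the completed zeta function. First I would recall the functional equation for the Dedekind zeta function: writing the completed function
\[
\Lambda_K(s) = \big( D_K \big)^{s/2} \Big( \pi^{-s/2}\Gamma(s/2) \Big)^{r_1} \Big( (2\pi)^{-s}\Gamma(s) \Big)^{r_2} \zeta_K(s),
\]
where $r_1, r_2$ are the numbers of real and complex places (so $r_1 + 2r_2 = [K:\Q]$), one has $\Lambda_K(s) = \Lambda_K(1-s)$, and $\Lambda_K$ is holomorphic except for simple poles at $s = 0, 1$. Multiplying by $s(1-s)$ to clear the poles, the plan is to apply the standard Phragmén–Lindelöf convexity bound on the vertical strip $-\delta \leq \Re(s) \leq 1+\delta$.

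The key steps, in order: (1) On the line $\Re(s) = 1+\delta$, the Euler product \eqref{eqn:DedekindZeta-K} (or \eqref{eqn:DedekindZeta-Q} with the bound $\lambda_K(n) \leq d_{[K:\Q]}(n)$ from \eqref{eqn:Coefficients}) gives $|\zeta_K(1+\delta+it)| \ll_{[K:\Q],\delta} 1$, hence a bound for $\Lambda_K$ there that is polynomial in $D_K$ and $|1+it|$ with the exponents coming entirely from the archimedean factors and the $D_K^{s/2}$ factor. (2) By the functional equation, this yields a matching bound on $\Re(s) = -\delta$. (3) Stirling's formula controls the growth of the gamma factors on vertical lines: $|\Gamma(\sigma+it)|$ grows like $|t|^{\sigma - 1/2} e^{-\pi|t|/2}$, so on $\Re(s) = 1/2$ each archimedean Euler factor contributes a power of $|1+it|$, and altogether the gamma factors contribute $|1+it|^{[K:\Q] \cdot (\text{something})}$ — one must track that the combined exponent works out to $[K:\Q]/4$ at the central line after dividing through the convexity interpolation. (4) Apply Phragmén–Lindelöf to $s(1-s)\Lambda_K(s) D_K^{-(s\text{ or }1-s)/2}\cdot(\text{normalization})$ to interpolate; the bound at $\Re(s) = 1/2$ is the geometric mean of the bounds at the two edges, giving $\big(D_K |1+it|^{[K:\Q]}\big)^{1/4}$, and absorbing the polynomial factors in $t$ coming from clearing the poles and from Stirling into the $|1+it|^{\delta}$ and $D_K^{\delta}$ slack (or more carefully, noting that near $t = 0$ the pole clearing factor $s(1-s)$ is harmless and for large $|t|$ it is absorbed by the $\delta$).

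I expect the main obstacle to be bookkeeping the archimedean and discriminant exponents cleanly: one must verify that the exponent of $D_K$ is exactly $1/4$ (not, say, depending on $r_1, r_2$ separately) and that the exponent of $|1+it|$ is exactly $[K:\Q]/4$, both only up to the $\delta$ loss. This is the classical computation showing that the analytic conductor of $\zeta_K$ at height $t$ is $\asymp D_K (1+|t|)^{[K:\Q]}$, and the convexity bound is the fourth root of the conductor. The cleanest route is to cite a standard reference (e.g. Iwaniec–Kowalski, or Rademacher's explicit convexity estimate for Hecke $L$-functions) rather than reproving Phragmén–Lindelöf from scratch; indeed since the statement already carries an arbitrary $\delta > 0$ of slack, one need not optimize, so invoking the textbook convexity bound for the trivial Hecke character suffices. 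The only care needed is to phrase the archimedean local conductor contribution uniformly in $t$ including small $|t|$, which is why the $|1+it|$ rather than $|t|$ appears.
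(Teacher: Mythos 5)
Your proposal is correct and is essentially the same approach as the paper, which simply cites Rademacher's explicit Phragm\'en--Lindel\"of convexity estimate (the very reference you suggest at the end). The sketch of the functional equation, Stirling, and interpolation argument is the standard proof of that cited result.
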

\begin{proof}
	See, for example, Rademacher \cite{Rademacher-PhragmenLindelof}.
\end{proof}

\begin{lemma} \label{lem:weight} 
For an integer $k \geq 1$, define $\varphi_{k} : [0,\infty) \to [0,\infty)$ by
\[
\varphi_{k}(t) = \begin{cases} \dfrac{t \log(1/t)^{k}}{k!}
 	& 0 < t \leq  1, \\
 	0 & t =0 \text{ or } t > 1.
\end{cases}
\]
Its maximum value is $\varphi_k(e^{-k}) \leq 1$ and its Mellin transform $\widehat{\varphi_{k}}(s)$  is given by 
\[
\widehat{\varphi_{m}}(s) = \frac{1}{(s+1)^{k+1}} \quad \text{ for } s \in \mathbb{C} \setminus \{-1\}. 
\]
\end{lemma}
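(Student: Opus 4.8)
The plan is to pass to the variable $u = \log(1/t)$, under which both assertions collapse to elementary facts about the Gamma integral. Writing $t = e^{-u}$ with $u \in [0,\infty)$, one has $\varphi_k(t) = \tfrac{u^k e^{-u}}{k!}$ for $0 < t \le 1$ (that is, $u \ge 0$), while $\varphi_k$ vanishes at the endpoints $t = 0$ and $t = 1$ and for $t > 1$. So the entire lemma is really a statement about the classical weight $u^k e^{-u}$ on the half-line, transported through this change of variables.

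For the maximum, I would note that $u \mapsto u^k e^{-u}$ is positive on $(0,\infty)$, tends to $0$ as $u \to 0^+$ and as $u \to \infty$, and has derivative $u^{k-1} e^{-u}(k - u)$, which vanishes on $(0,\infty)$ only at $u = k$. Hence $\varphi_k$ attains its maximum over $[0,\infty)$ at $t = e^{-k}$, with value $\varphi_k(e^{-k}) = \tfrac{k^k e^{-k}}{k!}$. To see this is at most $1$, I would invoke the crude bound $e^k = \sum_{n \ge 0} k^n/n! \ge k^k/k!$, equivalently $k! \ge k^k e^{-k}$, which immediately gives $\varphi_k(e^{-k}) \le 1$.

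For the Mellin transform, for $\Re(s) > -1$ the integral $\widehat{\varphi_k}(s) = \int_0^\infty \varphi_k(t)\, t^{s-1}\,dt = \tfrac{1}{k!}\int_0^1 t^s \log(1/t)^k\,dt$ converges absolutely, since near $t = 0$ the integrand is $O\big(t^{\Re(s)}|\log t|^k\big)$. The substitution $t = e^{-u}$ turns it into $\tfrac{1}{k!}\int_0^\infty u^k e^{-(s+1)u}\,du = \tfrac{\Gamma(k+1)}{k!\,(s+1)^{k+1}} = \tfrac{1}{(s+1)^{k+1}}$. Since the right-hand side is holomorphic on $\C \setminus \{-1\}$, it furnishes the analytic continuation, so the stated identity (with $\widehat{\varphi_k}$ in place of the typographical $\widehat{\varphi_m}$) holds throughout $\C \setminus \{-1\}$.

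There is no genuine obstacle in this lemma; the only points meriting a word of care are the convergence of the defining integral at $t = 0$, which forces the restriction $\Re(s) > -1$ for the integral representation, and the understanding that $(s+1)^{-(k+1)}$ serves as the analytic continuation on the rest of the plane rather than as a convergent Mellin integral. I expect the write-up to be a few lines.
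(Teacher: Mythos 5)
Your proof is correct. The one point worth flagging is that it is not quite the route the paper gestures at: the paper dispatches this lemma with ``routine induction and calculus,'' which in context means establishing the Mellin transform formula by induction on $k$, presumably via integration by parts in $\int_0^1 t^s \log(1/t)^k\,dt$ to pass from $k$ to $k-1$, with the base case $k=0$ or $k=1$ done directly. You instead change variables $t = e^{-u}$ once and for all, reducing the Mellin integral to a Gamma integral $\frac{1}{k!}\int_0^\infty u^k e^{-(s+1)u}\,du = \frac{\Gamma(k+1)}{k!\,(s+1)^{k+1}}$, which handles every $k$ simultaneously and makes the identity transparent; the same substitution simultaneously locates the maximum at $u=k$, whereas the paper's ``calculus'' would differentiate in $t$ directly. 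Your approach is a little slicker (no recursion, one substitution does both halves of the lemma); the paper's is marginally more self-contained in that it does not invoke the evaluation $\Gamma(k+1)=k!$, though that identity is itself proved by the very same induction. You also correctly handle the two small subtleties — convergence of the Mellin integral requires $\Re(s) > -1$, and the stated formula on all of $\mathbb{C}\setminus\{-1\}$ is the analytic continuation — and you correctly note the typo $\widehat{\varphi_m}$ for $\widehat{\varphi_k}$ in the statement.
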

\begin{proof}  This can be verified by a routine induction and calculus.  
\end{proof}

\section{Proof of Theorem 1}
\label{sec:MainProof}

Let $K \neq \Q$ be a number field and $\ell \geq 2$ be an integer.   Fix 
\begin{equation}
\eta = 1/2, \quad  y = D_K^{\frac{1-\eta}{2\ell([K:\Q]-1)}} 
\quad 
\text{and} 
\quad 
x = y^{8 \ell [K:\Q]}. 
\label{eqn:FixParameters}
\end{equation}
Note  $D_K^2 \leq x \leq D_K^3$. 
Using \eqref{eqn:Coefficients-Flat}, define the prime ideal counting function
\begin{align*}
\pi^{\flat}_K(y) & := \sum_{p \leq y} \lambda^{\flat}_K(p) = |\{ \kp  : \N\kp \leq y,  \kp \nmid \mathfrak{d}_K , \N\kp = p \text{ for some prime } p \}| 
\end{align*}
so, by a less refined application of \cref{lem:EV}, we have that
\[
|\mathrm{Cl}_K[\ell]| \ll_{[K:\Q],\ell} \frac{\kappa_K D_K^{1/2}}{1+\pi^{\flat}_K(y)} . 
\]
Let $\pi_{\Q}$ be the classical prime counting function. Observe $0 \leq \pi^{\flat}_K(y) \leq [K:\mathbb{Q}] \pi_{\Q}(y)$ by \eqref{eqn:Coefficients} and \eqref{eqn:Coefficients-Flat}, so 
\begin{equation}
[K:\mathbb{Q}] (\pi_{\Q}(z) - 1) \leq \pi^{\flat}_K(y) \leq  [K:\mathbb{Q}] \pi_{\Q}(z) \qquad \text{for some} \quad 2 \leq z \leq y. 
\label{eqn:FewPrimes}
\end{equation}
Since $\pi_{\Q}(z) \gg z/\log z$, it follows that 
\begin{equation} \label{eqn:Torsion-ResidueBound}
|\mathrm{Cl}_K[\ell]| \ll_{[K:\Q],\ell}    \frac{\log z}{z} \cdot \kappa_K D_K^{1/2}.
\end{equation}
Our goal is therefore to estimate $\kappa_K$ in terms of the unknown parameter $z$, and then take the supremum of the above bound over all possible values of $2 \leq z \leq y$. 

We begin by defining the sum 
\[
S(x) := \sum_{n \leq x}  \lambda^{\flat}_K(n)   \varphi_k(n/x),
\]
where $k = [K:\Q]-1 \geq 1$ and $\varphi_k$ is given by \cref{lem:weight}.
By \eqref{eqn:DedekindZeta-Q}, \eqref{eqn:Coefficients-Flat}, and \eqref{eqn:H}, it follows by Mellin inversion that 
\begin{align*}
S(x) & = \frac{1}{2\pi i} \int_{(2)} \zeta_K(s) H_K(s,x) \frac{x^s}{(s+1)^{[K:\Q]}} ds \\
& = \frac{\kappa_K H_K(1,x)}{2^{[K:\mathbb{Q}]}} x  + \frac{1}{2\pi} \int_{-\infty}^{\infty}  \zeta_K(\tfrac{1}{2}+it) H_K(\tfrac{1}{2}+it,x)\frac{x^{1/2+it}}{(3/2+it)^{[K:\Q]}} dt . 
\end{align*}
Applying \cref{lem:Convexity} with $\delta = \frac{1}{8}$, the upper bound on $H_K(\frac{1}{2}+it,x)$ from \cref{lem:Sift},  and $\int_{-\infty}^{\infty}  (9/4+ t^2 )^{-\frac{5}{16}[K:\mathbb{Q}]} dt \ll 1$, we deduce for some constant $C = C_{[K:\Q]} > 0$ that
\[
S(x) =  \frac{\kappa_K H_K(1,x)}{2^{[K:\mathbb{Q}]}} x  + O_{[K:\Q]}\Big(  D_K^{3/8} x^{1/2} \big(  e^{(\log D_K)^{1/2}} \log x \big)^{C}  \Big). 
\]
Rearranging and applying the lower bound on $H_K(1,x)$ from \cref{lem:Sift}, we find that
\begin{equation} \label{eqn:Residue}
\begin{aligned}
\kappa_K 
& \ll_{[K:\Q]} \Big( \frac{S(x)}{x} +   \frac{D_K^{3/8}}{x^{1/2}} \big( e^{(\log D_K)^{1/2}} \log x \big)^{C}  \Big) (\log\log D_K)^{[K:\Q]/2} \\
& \ll_{[K:\Q]}  \frac{S(x)}{x} (\log\log D_K)^{[K:\mathbb{Q}]/2}  +  D_K^{-1/2}
\end{aligned}
\end{equation}
where the secondary terms have been absorbed into the $D_K^{-1/2}$ term since $D_K^2 \leq x \leq D_K^3$. It remains to estimate $S(x)$.  

As $\varphi_{k}(t) \leq 1$ for all $t \geq 0$ by \cref{lem:weight} and $\lambda^{\flat}_K$ is multiplicative, we may write 
\begin{equation}
S(x) 
\leq \sum_{n \leq x} \lambda^{\flat}_K(n) = \sum_{\substack{m \leq x \\ p \mid m   \implies p \leq y} }  \, \lambda^{\flat}_K(m) \, \sum_{\substack{ n \leq x/m \\ p \mid n \implies p > y } }  \lambda^{\flat}_K(n).
\label{eqn:KeyStep1}
\end{equation}
For every squarefree integer $n \leq x$ free of prime factors less than $y$, we have by multiplicativity of $\lambda^{\flat}_K$ that 
\[
\lambda^{\flat}_K(n) = \prod_{p \mid n} \lambda^{\flat}_K(p) \leq [K:\Q]^{8\ell [K:\Q]} 
\]
since $\lambda^{\flat}_K(p) \leq [K:\Q]$ by \eqref{eqn:Coefficients} and \eqref{eqn:Coefficients-Flat}, and $|\{ p : p \mid n\}| \leq 8\ell [K:\Q]$ by \eqref{eqn:FixParameters}. For fixed $m \leq x$, it follows that
\[
 \sum_{\substack{ n \leq x/m \\ p \mid n \implies p > y } } \lambda^{\flat}_K(n)   \ll_{[K:\Q], \ell}   \sum_{\substack{n \leq x/m \\ p \mid n    \implies p > y } } 1  
\ll_{[K:\Q], \ell}  \mathbf{1}_{\{ m \leq x\}} + \frac{x}{\log y} \frac{\mathbf{1}_{\{ m \leq x/y\}}}{m}. 
\]
Overall, as $\log y \asymp_{[K:\Q],\ell} \log x$ by \eqref{eqn:FixParameters}, this implies by \eqref{eqn:KeyStep1}  that 
\begin{equation} \label{eqn:S-split}
S(x) \ll_{[K:\Q], \ell} \sum_{\substack{m \leq x \\ p \mid m \implies p \leq y} } \lambda^{\flat}_K(m) + \frac{x}{\log x} \sum_{\substack{m \leq x/y \\ p \mid m \implies p \leq y} } \frac{\lambda^{\flat}_K(m)}{m}.
\end{equation}

We proceed to estimate the righthand and lefthand sum in \eqref{eqn:S-split}. The righthand sum can be bounded via \eqref{eqn:FewPrimes},  the inequalities $\lambda^{\flat}_K(p) \leq [K:\Q]$ and  $\log(1+u) < u$ for $u > 0$, and the prime number theorem to yield
\[
\sum_{\substack{m \leq x/y  \\ p \mid m \implies p \leq y} } \frac{\lambda^{\flat}_K(m)}{m} \leq \prod_{\substack{ p \leq y } } \Big(1 + \frac{\lambda^{\flat}_K(p)}{p}\Big) 
\leq \exp\Big(\sum_{p \leq z}  \frac{[K:\Q]}{p} \Big)  \ll_{[K:\Q]} (\log z)^{[K:\Q]}.
\]
For the lefthand sum, we apply Rankin's trick with a yet-to-be-specified parameter $\frac{3}{4} \leq \alpha < 1$ and use \eqref{eqn:FewPrimes} again to see that 
\[
\sum_{\substack{m \leq x \\ p \mid m \implies p \leq y} } \lambda^{\flat}_K(m)
\leq \sum_{\substack{m \leq x \\ p \mid m \implies p \leq y} }\lambda^{\flat}_K(m)   \Big( \frac{x}{m} \Big)^\alpha
\leq x^{\alpha} \prod_{\substack{ p \leq y} } \Big( 1 + \frac{\lambda^{\flat}_K(p)}{p^{\alpha}} \Big). 
\]
We similarly have   that  
\[
\prod_{\substack{ p \leq y} } \Big( 1 + \frac{\lambda^{\flat}_K(p)}{p^{\alpha}} \Big) \leq \exp\Big(\sum_{p \leq z}  \frac{[K:\Q]}{p^{\alpha}} \Big)  \leq \exp\Big( O\Big( \frac{[K:\Q]  z^{1-\alpha}}{(1-\alpha) \log z}\Big)  \Big)
\]
by the prime number theorem.  

Collecting these estimates in \eqref{eqn:S-split}, we conclude that 
\begin{align*}
S(x)
&  \ll_{[K:\Q],\ell} x \exp\Big( - (1-\alpha) \log x + O\Big(\frac{[K:\Q] z^{1-\alpha}}{(1-\alpha) \log z} \Big) \Big) + \frac{x}{\log x}  (\log z)^{[K:\Q]} \\
&   \ll_{[K:\Q],\ell} x \exp\Big( -  \frac{\log x}{\log z} \Big) + \frac{x}{\log x}  (\log z)^{[K:\Q]} \\
& \ll_{[K:\Q],\ell}  \frac{x}{\log x}  (\log z)^{[K:\Q]}
\end{align*}
upon choosing $\alpha = \max\{ 1 -  (\log z)^{-1}, 3/4 \}$ in the second step. Note the third step applies the weak bound $e^{-u} \ll u^{-1}$ for $u = \frac{\log x}{\log z} \geq 8\ell [K:\Q]$ as $2 \leq z \leq y = x^{1/8\ell [K:\Q]}$ by \eqref{eqn:FixParameters}. 
 By \eqref{eqn:Residue} and the observation $D_K^2 \leq x \leq D_K^3$ from \eqref{eqn:FixParameters}, it follows that 
\[
\kappa_K \ll_{[K:\Q],\ell}  (\log z)^{[K:\Q]} (\log D_K)^{-1}  (\log\log D_K)^{[K:\Q]/2}
\]
for $2 \leq z \leq y = D_K^{\frac{1}{4 \ell ([K:\Q]-1)}}$. From \eqref{eqn:ClassNumberFormula} and \eqref{eqn:RegulatorBound}, we have that 
\[
|\mathrm{Cl}_K| \ll_{[K:\Q],\ell} (\log z)^{[K:\Q]} \cdot D_K^{1/2} (\log D_K)^{-r_K+\rho_K-1} (\log\log D_K)^{[K:\Q]/2}.
\]
Since $V_K$ satisfies \eqref{eqn:ClassNumberAssumption}, this implies $V_K \log\log D_K \ll_{[K:\Q],\ell} \log z$ in which case
\[
z \geq (\log D_K)^{3 \delta V_K},
\]
where $\delta = \delta_{[K:\Q],\ell} > 0$ is sufficiently small. 

Combining these observations with \eqref{eqn:Torsion-ResidueBound}, we obtain that 
 \begin{align*}
|\mathrm{Cl}_K[\ell]| 
&  \ll_{[K:\Q],\ell} z^{-1} (\log z)^{[K:\Q]+1} D_K^{1/2} (\log D_K)^{-1}  (\log\log D_K)^{[K:\Q]/2}  
\end{align*}
for some unknown $z$ satisfying $(\log D_K)^{3\delta V_K} \leq z \leq  y$. The supremum  of this bound occurs when $z \asymp_{[K:\Q],\ell} (\log D_K)^{3\delta V_K}$. We conclude  that 
\begin{align*}
|\mathrm{Cl}_K[\ell]| 
& \ll_{[K:\Q],\ell}  V_K^{[K:\Q]+1}  D_K^{1/2} (\log D_K)^{-2\delta V_K - r_K+\rho_K-1}  (\log\log D_K)^{3[K:\Q]/2+1} \\
& \ll_{[K:\Q],\ell}   |\mathrm{Cl}_K| \cdot V_K (\log D_K)^{-2\delta V_K} \log\log D_K
\end{align*}
from the inequality $\delta V_K \geq [K:\Q] \geq r_K-\rho_K$ and the definition of $V_K$ in \eqref{eqn:ClassNumberAssumption}. As 
\[
 V_K (\log D_K)^{-2\delta V_K}\log\log D_K \ll_{[K:\Q],\ell} (\log D_K)^{-\delta V_K}, 
 \]
this completes the proof of \cref{thm:Main}. 
\qed 
%

\section{Proof of Theorem 2}
\label{sec:SubconvexityProof}

We proceed similarly to the proof of \cref{thm:Main} but we shall take advantage of the assumption on integrality (instead of primality) appearing in \cref{lem:EV}. Let $K \neq \Q$ be a number field and $\ell \geq 2$ be an integer. Fix 
\begin{equation}
0 < \delta < \eta/2, \quad  x = D_K^{\frac{1-\delta/2}{2\ell([K:\Q]-1) }} 
\quad 
\label{eqn:FixParameters-Subconvexity}
\end{equation}
according to assumption \eqref{eqn:Assumption}. 
Define the integral ideal counting function
\begin{align*}
N_K^{\flat}(x) & := \sum_{n \leq x} \lambda^{\flat}_K(n) = |\{ \kn \subseteq \mathcal{O}_K : \N\kn \leq x, \N\kn \text{ squarefree}, \kp \mid \kn \implies  \kp \nmid \mathfrak{d}_K  \}| 
\end{align*}
so, by a full application of \cref{lem:EV},
\begin{equation} \label{eqn:Subconvexity-EVBound}
|\mathrm{Cl}_K[\ell]| \ll_{[K:\Q],\ell} \frac{\kappa_K}{N_K^{\flat}(x)}  D_K^{1/2}. 
\end{equation}

Given assumption \eqref{eqn:Assumption} with constant $A \geq 1$, we begin by defining the sum 
\[
S(x) := \sum_{n \leq x} \lambda^{\flat}_K(n) \varphi_k(n/x). 
\]
where $k = \lceil A \rceil + 1$ and $\varphi_k$ is given by \cref{lem:weight}. Since $\varphi_k(t) \leq 1$, it follows that
\begin{equation} \label{eqn:KeyStep2}
S(x) \leq \sum_{n \leq x} \lambda^{\flat}_K(n)  = N_K^{\flat}(x).	
\end{equation}
On the other hand, by  Mellin inversion, 
\begin{align*}
S(x) & = \frac{1}{2\pi i} \int_{(2)} \zeta_K(s) H_K(s,x) \frac{x^s}{(s+1)^{k+1}} ds \\
& = \frac{\kappa_K H_K(1,x)}{2^{[K:\mathbb{Q}]}} x  + \frac{1}{2\pi} \int_{-\infty}^{\infty}  \zeta_K(\tfrac{1}{2}+it) H_K(\tfrac{1}{2}+it,x)\frac{x^{1/2+it}}{(3/2+it)^{k+1}} dt . 
\end{align*}
By assumption \eqref{eqn:Assumption}, the upper bound on $H_K(\frac{1}{2}+it,x)$ from \cref{lem:Sift},  and the bound $\int_{-\infty}^{\infty}  (9/4+ t^2 )^{-1} dt \ll 1$, we deduce for some constant $C = C_{[K:\Q]} > 0$ that
\[
S(x) =  \frac{\kappa_K H_K(1,x)}{2^{[K:\mathbb{Q}]}} x  + O_{[K:\Q],\ell,A,\eta}\Big(  D_K^{\frac{1-\eta}{4\ell([K:\Q]-1)}}  x^{1/2} (e^{ (\log D_K)^{1/2}} \log x )^C  \Big). 
\]
Dividing both sides by $x$ and applying our choice in \eqref{eqn:FixParameters-Subconvexity}, we find that
\begin{equation}
\frac{S(x)}{x} = \frac{\kappa_K H_K(1,x)}{2^{[K:\Q]}} + O_{[K:\Q],\ell,A,\eta,\delta}\Big( D_K^{\frac{-\eta+\delta}{4 \ell ([K:\Q]-1)}} \Big). 
\label{eqn:Subconvexity-Penultimate}
\end{equation}

We now deduce \cref{thm:Subconvexity} by proving: 
\begin{enumerate}[label=(\roman*)]
	\item  an ineffective bound for  $\frac{1}{2}-\frac{1}{2\ell ([K:\Q]-1)} < \Delta < \frac{1}{2}$ and all number fields $K$;
	\item an effective bound for $\frac{1}{2}-\frac{1}{2\ell ([K:\Q]-1)} < \Delta < \frac{1}{2}$ and all $K$ without a quadratic subfield; 
	\item an effective bound for $\frac{1}{2}-\frac{\eta}{4\ell ([K:\Q]-1)} < \Delta < \frac{1}{2}$ and all number fields $K$. 
\end{enumerate}
For cases (i) and (ii),  we directly combine \eqref{eqn:Subconvexity-Penultimate} with \eqref{eqn:Subconvexity-EVBound} and \eqref{eqn:KeyStep2} to see that 
\[
|\mathrm{Cl}_K[\ell]| \ll_{[K:\Q],\ell} \frac{D_K^{1/2}}{x} \cdot \Big( H_K(1,x) + O_{[K:\Q],\ell,A,\eta,\delta}\Big( \kappa_K^{-1} D_K^{\frac{-\eta+\delta}{4\ell ([K:\Q]-1)}} \Big)  \Big)^{-1}.
\]
Since $x$ satisfies \eqref{eqn:FixParameters-Subconvexity} and  $H_K(1,x) \gg_{[K:\Q]} (\log\log D_K)^{-[K:\Q]/2}$ by \cref{lem:Sift}, it suffices to give a suitable lower bound for the residue $\kappa_K$. Case (i) follows by applying the ineffective Brauer--Siegel bound in the form 
\[
\kappa_K \gg_{[K:\Q],\ell,\delta} D_K^{\frac{-\delta/2}{2\ell ([K:\Q]-1)} }.
\]
Case (ii) follows by applying an effective lower bound of Stark \cite{Stark-EffectiveBrauerSiegel}   in the form 
\[
\kappa_K \gg_{[K:\Q]} (\log D_K)^{-1}.
\]
For case (iii), we instead give an upper bound for the residue. We  rearrange \eqref{eqn:Subconvexity-Penultimate} and again apply the lower bound on $H_K(1,x)$ from \cref{lem:Sift} to deduce  that
\[
\kappa_K \ll_{[K:\Q],\ell,A,\eta,\delta}  \frac{S(x)}{x} (\log\log D_K)^{[K:\mathbb{Q}]/2}   + D_K^{\frac{-\eta+\delta}{4 \ell ([K:\Q]-1)}} 
\]
for all number fields $K$. By \eqref{eqn:Subconvexity-EVBound}, we find that
	\[
		|\mathrm{Cl}_K[\ell]|
			\ll_{[K:\Q],\ell,A,\eta,\delta} \frac{D_K^{1/2}}{x} \frac{S(x)}{N_K^\flat(x)}(\log \log D_K)^{[K:\mathbb{Q}]/2} + D_K^{\frac{1}{2} -\frac{\eta-\delta}{4 \ell ([K:\Q]-1)}}.
	\]
 This yields case (iii) with our choice of $x$ in \eqref{eqn:FixParameters-Subconvexity} and that $S(x) \leq N^{\flat}_K(x)$ by \eqref{eqn:KeyStep2}. 
\qed 

\begin{remark} The distinction between the outcomes in \cref{thm:Main,thm:Subconvexity} can be seen by comparing \eqref{eqn:KeyStep1} and \eqref{eqn:KeyStep2}. The subconvexity hypothesis \eqref{eqn:Assumption} in \cref{thm:Subconvexity} allows us to count integral ideals below the key threshold $x \approx D_K^{1/2\ell([K:\Q]-1)}$ from \cref{lem:EV}, so every non-zero term in $S(x)$ contributes to the desired savings via \eqref{eqn:KeyStep2}. Without this hypothesis, we can only count integral ideals below $x \approx D^{1/2}$ so the key threshold $y \approx D_K^{1/2\ell([K:\Q]-1)}$ in \cref{sec:MainProof} corresponds to the $y$-smooth norms $m$ (i.e., $p \mid m  \implies p \leq y$) contributing to $S(x)$ via \eqref{eqn:KeyStep1}. If very few $y$-smooth norms exist, then   the $y$-rough norms  $n$ (i.e., $p \mid n \implies p > y$) form the bulk of the contribution to $S(x)$ and do not necessarily provide any savings via the quantity $M$ in \cref{lem:EV}. This $y$-rough contribution appears to be our  bottleneck to bounding the residue $\kappa_K$ by more than $1/\log y$ in the unconditional setting of \cref{thm:Main}. 
\end{remark}

\begin{remark} \label{rem:Effective}
If $K$ contains a quadratic subfield, then one might ask whether the ineffective form of \cref{thm:Subconvexity} can still be made effective by again applying an effective Brauer--Siegel theorem due to Stark \cite{Stark-EffectiveBrauerSiegel}. We do not see how to do this immediately since \eqref{eqn:Subconvexity-Penultimate} requires 
\[
\kappa_K \gg_{[K:\Q],\ell,A,\eta,\delta} D_K^{\frac{-\eta+\delta}{4\ell([K:\Q]-1)} }
\]
 to ensure $S(x) \neq 0$ whereas Stark's theorem shows $\kappa_K \gg_{[K:\Q]} D_K^{-\frac{1}{[K:\Q]}}$ for such fields $K$.  As $\eta < 1$, this lower bound is insufficient for all integers $\ell \geq 2$. This lower bound barrier is notoriously difficult and tied to Landau--Siegel zeros. Our alternative argument allows us to avoid this requirement and hence establish an effective form of \cref{thm:Subconvexity}. 
 	
\end{remark}


\bibliographystyle{alpha}
\bibliography{references}

\end{document}